\renewenvironment{description}[1][0pt]
  {\list{}{\labelwidth=0pt \leftmargin=#1
   }}
  {\endlist}
\newtheorem{theorem}{Theorem}
\newtheorem{lemma}[theorem]{Lemma}
\newtheorem{proposition}[theorem]{Proposition}
\numberwithin{equation}{section} 
\numberwithin{theorem}{section}
\begin{document}
	\begin{frontmatter}
		\title{Upper bounds for the blow-up time of a system of fractional differential equations with Caputo derivatives and a numerical scheme for the solution of the system}
		
		\author[jv]{Jos\'{e} Villa-Morales\corref{co1}} 
		\ead{jvilla@correo.uaa.mx}
		\address[jv]{Departamento de Matem\'{a}ticas y F\'{\i}sica,
			Universidad Aut\'onoma de Aguascalientes,
			Av. Universidad 940, C.P. 20131 Aguascalientes, Ags., M\'exico}

		\cortext[co1]{Corresponding author}
		
		\begin{abstract}
			The article provides upper bounds for the blow-up time of a system of fractional differential equations in the Caputo sense. Furthermore, concrete examples of blow-up time estimation are given using a numerical algorithm of the predictor-corrector type.
		\end{abstract}

		\begin{keyword}  Blow-up time, upper bounds, Caputo's fractional differential equations, predictor-corrector. \MSC 26A33, 34A08, 65R20, 34A40
		\end{keyword}
	\end{frontmatter}
	
\section{Introduction}
The study of fractional differential equations is a current topic in pure and applied mathematics, this is due in large part to the need for new techniques for their analysis and the multiple applications that this mathematical entity has (see for example \cite{KST2002}, \cite{ZLW2014}, \cite{VRR2022} and the references it contains). Quite concisely, we can say that the theoretical study of fractional differential equations has focused primarily on proving existence and uniqueness theorems (see \cite{ZC2012}, \cite{BQ2009}, \cite{SZL2012}, \cite{FT2004}, \cite{MST2007}, \cite{AS2022}) or in giving criteria for the non-existence of solutions (see \cite{KAAA2014}, \cite{LT2010}, \cite{K2010}, \cite{JMS2019}). Something similar occurs in the context of fractional partial differential equations (see \cite{KLT2005}, \cite{CV2019}). This work is related to the second topic, that is, criteria for the non-existence of global solutions.

Let us assume that $\alpha \in (0,1)$ and $q_{1}$, $q_{2}$, $p_{11}$, $p_{12}$, $p_{21}$, $p_{22}$ are non-negative real numbers. In the paper, we will consider the system of Caputo's fractional differential equations, 
\begin{eqnarray}
^{C}\hspace{-0.1cm}D_{0+}^{\alpha}x(t)&=&t^{q_{1}}x^{p_{11}}(t)y^{p_{12}}(t), \ \ t>0, \label{eq1}\\
^{C}\hspace{-0.1cm}D_{0+}^{\alpha}y(t)&=&t^{q_{2}}y^{p_{21}}(t)x^{p_{22}}(t), \ \ t>0, \label{eq2}
\end{eqnarray}
with initial conditions
\begin{equation}
x(0)>0, \ \ y(0)>0.  \label{incond}
\end{equation}
We will say that the solution $(x(t),y(t))$ of the system (\ref{eq1})-(\ref{incond}) blows-up (or explodes) in finite time if there is a positive real-number $\tau_{xy}$ such that the conditions (\ref{eq1})-(\ref{incond}) are satisfied for all $0\leq t<\tau_{xy}$ and
\begin{equation}
\lim_{t\uparrow t_{xy}} \, \sup\{|x(s)|+|y(s)|: s\in [0,t]\}=\infty.
\end{equation}
The time $\tau_{xy}$ is called blow-up (or explosion) time. The objective of this work is to give some conditions on the parameters of the system (\ref{eq1})-(\ref{eq2}) in order to obtain upper bounds of the explosion time $\tau_{xy}$, as a consequence conditions under which the system has no global solution are obtained, see Theorem \ref{ThM}.

It should be noted that, although it is a bit tedious, when $\alpha=1$ and $q_{1}=q_{2}$ the explosion time of the system of ordinary differential equations (\ref{eq1}-(\ref{eq2}) can be given explicitly. To the best of our knowledge, there are no non-trivial fractional differential equations for which their explosion time is explicitly known. In addition, it is convenient to point out that there are practical applications of modeling with fractional differential equations where the explosions occurs in a finite time. For example, this occurs in the study of the fracture of materials or in the study of the combustion of gases, see for example \cite{ZLW2014} or \cite{BE1989}; in these cases, abrupt changes occur in the phenomenon under consideration. Due to this, when modeling, it is convenient to have numerical and mathematical methods that allow us to bound the blow-up time of a system of interest.

Roughly speaking, it can be said that the difficulty of finding upper bounds for the blow-up time of the system $(x(t),y(t))$, solution of (\ref{eq1})-(\ref{eq2}), consists of comparing the growth of the components of the system, that is, verifying that, for example, after a certain time the component $x(t)$ is greater than or equal to the component $y(t)$. When this occurs, the problem is reduced, in a sense, to the study of a single fractional inequality, see Proposition \ref{propblow}. In our case, we have overcome this difficulty by introducing a new inequality related to the difference of the components of the system, so we can determine if it is positive or negative, therefore we can compare the components where the system has a solution, see the inequality (\ref{estj}). As far as we know, there are no comparison theorems for the solutions of a system of fractional differential equations that can be applied to this type of system; for this reason, the technique for studying the blow-up time, as well as the upper bounds obtained, seem new to us.

Since, in general, it is rare to obtain explicit solutions to a system of differential equations, it is necessary to consider a numerical scheme to give an idea of its solution. In our case, we adapt a numerical scheme that is a modification of the fractional Euler method that considers a corrective term. Using this numerical algorithm, it is possible to graphically estimate the blow-up time of the system (\ref{eq1})-(\ref{eq2}). Three illustrative numerical examples are discussed.

The article is organized as follows. We present some definitions and certain preliminary results in Section 2, here it is worth highlighting the important Jensen's type inequality (\ref{convexpro}). In Section 3, we study a fractional inequality and prove the main result. Finally, in Section 4, we give a numerical algorithm for solving a system of general fractional differential equations and consider three illustrative numerical examples.

\section{Preliminaries}

In what follows we will assume that $\alpha \in (0,1)$ and $a$ will be a real number. We will begin by recalling some definitions and basic properties that will be useful to us. If $u(t)$ is an absolutely continuous function, then the Caputo left hand-side fractional derivative is defined as
\begin{equation}
^{C}\hspace{-0.1cm}D_{a+}^{\alpha}u(t)=\frac{1}{\Gamma(1-\alpha)} \int_{a}^{t} \frac{u^{\prime}(s)ds}{(t-s)^{\alpha}}, \ \ t>a, \label{Capudef}
\end{equation}
where $\Gamma$ is the usual gamma function.

On the other hand, the Riemann-Liouville right hand-side fractional derivative of a function $u(t)$ is defined as
 \begin{equation*}
^{RL}\hspace{-0.1cm}D_{a-}^{\alpha}u(t)=-\frac{1}{\Gamma(1-\alpha)} \frac{d}{dt}\int_{t}^{a} \frac{u(s)ds}{(s-t)^{\alpha}}, \ \ t<a.
\end{equation*}

Another concept that we will use is the following, the Riemann-Liouville left-side fractional integral of a function $u(t)$ is defined as
\begin{equation}
I^{\alpha}_{a+}u(t)=\frac{1}{\Gamma(\alpha)} \int_{a}^{t} \frac{u(s)ds}{(t-s)^{1-\alpha}}, \ \ t>a.
\end{equation}

We compile some basic properties in the next result.

\begin{lemma}
Let $\beta$ be a real number, then
\begin{eqnarray}
\left( ^{RL}\hspace{-0.1cm}D_{a-}^{\alpha} (a-s)^{\beta-1}\right) (t)&=&
\begin{cases}
0, & \beta = \alpha,\\
\frac{\Gamma(\beta)}{\Gamma(\beta-\alpha)}(a-t)^{\beta-\alpha-1}
, & \beta \neq \alpha.
\end{cases} \label{deripot}
\end{eqnarray}

\noindent If the function $u(t)$ is continuous, then
\begin{equation}
(I^{\alpha}_{a+} \, ^{C}\hspace{-0.1cm}D_{a+}^{\alpha} u)(t)=u(t)-u(a), \ \ t>a. \label{intder}
\end{equation}
\end{lemma}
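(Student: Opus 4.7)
The lemma has two independent parts; I would treat them in turn.

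For the derivative formula (\ref{deripot}), the plan is to unfold the definition of $^{RL}D_{a-}^{\alpha}$ and evaluate the inner integral via a Beta-function identity. With the substitution $s = t + (a-t)r$, $r\in[0,1]$, one gets $a-s = (a-t)(1-r)$ and $s-t=(a-t)r$, so
$$\int_{t}^{a} \frac{(a-s)^{\beta-1}}{(s-t)^{\alpha}}\, ds = (a-t)^{\beta-\alpha}\int_{0}^{1}(1-r)^{\beta-1}r^{-\alpha}\, dr = (a-t)^{\beta-\alpha}\,\frac{\Gamma(\beta)\,\Gamma(1-\alpha)}{\Gamma(\beta-\alpha+1)},$$
provided $\beta>0$ (so the Beta integral converges at $r=1$). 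When $\beta=\alpha$ the power $(a-t)^{\beta-\alpha}$ is constant in $t$, hence its $t$-derivative is $0$. When $\beta\neq\alpha$, differentiating $(a-t)^{\beta-\alpha}$ produces $-(\beta-\alpha)(a-t)^{\beta-\alpha-1}$; combining this with the prefactor $-1/\Gamma(1-\alpha)$ and using $(\beta-\alpha)\Gamma(\beta-\alpha)=\Gamma(\beta-\alpha+1)$ gives exactly the announced value $\frac{\Gamma(\beta)}{\Gamma(\beta-\alpha)}(a-t)^{\beta-\alpha-1}$.

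For the inversion identity (\ref{intder}), under the tacit regularity needed for (\ref{Capudef}) (that is, $u$ absolutely continuous), I would substitute both definitions, interchange the two integrals by Fubini, and recognize another Beta integral. Concretely,
$$(I^{\alpha}_{a+}\,^{C}\hspace{-0.1cm}D_{a+}^{\alpha}u)(t) \;=\; \frac{1}{\Gamma(\alpha)\Gamma(1-\alpha)}\int_{a}^{t}(t-s)^{\alpha-1}\!\!\int_{a}^{s}(s-r)^{-\alpha}u'(r)\, dr\, ds.$$
Swapping the order of integration turns the inner integral into $\int_{r}^{t}(t-s)^{\alpha-1}(s-r)^{-\alpha}\, ds$, which by the change of variable $s = r + (t-r)\sigma$ is $B(\alpha,1-\alpha)=\Gamma(\alpha)\Gamma(1-\alpha)$. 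The prefactor cancels, reducing the expression to $\int_{a}^{t}u'(r)\, dr = u(t)-u(a)$ by the classical fundamental theorem of calculus.

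The main (yet mild) obstacle in both parts is the analytic justification: in (\ref{deripot}) one must commute $d/dt$ with the improper integral near $s=t$ and $s=a$, and in (\ref{intder}) one must invoke Fubini on a jointly integrable but singular kernel $(t-s)^{\alpha-1}(s-r)^{-\alpha}|u'(r)|$. Both are standard once $\alpha\in(0,1)$, $\beta>0$ (in the first part), and $u'\in L^{1}_{\mathrm{loc}}$ (in the second part), so the substantive content of the lemma really is the two Beta-function evaluations sketched above.
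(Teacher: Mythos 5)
Your computations are correct, but they take a different route from the paper: the paper does not prove this lemma at all, it simply cites formulae (2.1.19)--(2.1.20) and Lemma 2.22 of Kilbas--Srivastava--Trujillo, whereas you give a self-contained derivation. Both of your Beta-function evaluations check out: the substitution $s=t+(a-t)r$ turns the Riemann--Liouville integral into $(a-t)^{\beta-\alpha}B(1-\alpha,\beta)$, and differentiating in $t$ together with $(\beta-\alpha)\Gamma(\beta-\alpha)=\Gamma(\beta-\alpha+1)$ yields exactly the stated right-hand side (with the $\beta=\alpha$ case degenerating to a constant); likewise the Fubini-plus-Beta argument for the inversion identity reduces to $\int_a^t u'(r)\,dr$. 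What your approach buys is transparency and two honest caveats that the paper glosses over: the formula (\ref{deripot}) genuinely needs $\beta>0$ for the integral defining the fractional derivative to converge (the lemma's ``let $\beta$ be a real number'' is too generous, though the paper only ever applies it with $\beta-1=\lambda>\alpha\tilde p-1>-1$, so no harm results), and the identity (\ref{intder}) needs $u$ absolutely continuous rather than merely continuous, since that is the hypothesis under which the Caputo derivative (\ref{Capudef}) is even defined. The citation-only route is shorter and standard for results of this type; your version is preferable if one wants the paper to be self-contained or to record precisely which hypotheses are actually being used.
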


\begin{proof}
The derivative (\ref{deripot}) is formulae (2.1.19) and (2.1.20) in \cite{KST2002}. On the other hand, Lemma 2.22 of \cite{KST2002} contains formula (\ref{intder}).
\end{proof}

For $\alpha>0$ and $\beta \in \mathbb{R}$ the Mittag-Leffler function $E_{\alpha, \beta}$ is defined by
\begin{equation}
E_{\alpha,\beta}(t)= \sum_{k=0}^{\infty} \frac{t^{k}}{\Gamma(\alpha k+\beta)}, \ \ t\in \mathbb{R}. \label{MiLef}
\end{equation}
In particular, $E_{\alpha,1}$ will be denoted by $E_{\alpha}$.

We have the following elementary result.

\begin{lemma}
Let us assume that $\lambda$ and $a$ are two real numbers and set
\begin{equation}
e_{\alpha}^{\lambda (a-t)}:=(a-t)^{\alpha-1}E_{\alpha,\alpha}[\lambda (a-t)^{\alpha}], \ \ t<a. \label{defz}
\end{equation}
If $0<\alpha \leq 1$, then the function $e_{\alpha}^{\lambda (a-t)}$ is positive and 
\begin{equation}
^{RL}\hspace{-0.1cm}D_{a-}^{\alpha} e_{\alpha}^{\lambda (a-t)} - \lambda e_{\alpha}^{\lambda (a-t)}=0, \ \ t < a. \label{solpos}
\end{equation}
\end{lemma}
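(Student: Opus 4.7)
The plan is to expand the function by the series definition of the Mittag--Leffler function, namely
$$e_{\alpha}^{\lambda(a-t)}=\sum_{k=0}^{\infty}\frac{\lambda^{k}(a-t)^{\alpha(k+1)-1}}{\Gamma(\alpha(k+1))},\qquad t<a,$$
which converges absolutely on every compact subset of $\{t<a\}$ since $E_{\alpha,\alpha}$ is entire. Positivity and the fractional identity will then be handled termwise on this series.

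For positivity I would split into two cases. If $\lambda\ge 0$, every summand is nonnegative and the $k=0$ term $(a-t)^{\alpha-1}/\Gamma(\alpha)$ is strictly positive for $t<a$, so $e_{\alpha}^{\lambda(a-t)}>0$. If $\lambda<0$, the assertion reduces to $E_{\alpha,\alpha}(-s)>0$ for all $s\ge 0$, which is the classical result (Pollard) that $E_{\alpha,\alpha}(-\,\cdot\,)$ is completely monotone on $[0,\infty)$ whenever $0<\alpha\le 1$, and hence strictly positive.

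For identity (\ref{solpos}) I would apply formula (\ref{deripot}) termwise with $\beta=\alpha(k+1)$. The $k=0$ term has $\beta=\alpha$ and is annihilated by the first branch of (\ref{deripot}); for $k\ge 1$ the second branch gives
$$^{RL}\hspace{-0.1cm}D_{a-}^{\alpha}\!\left[\frac{\lambda^{k}(a-t)^{\alpha(k+1)-1}}{\Gamma(\alpha(k+1))}\right]=\frac{\lambda^{k}(a-t)^{\alpha k-1}}{\Gamma(\alpha k)}.$$
Summing over $k\ge 1$ and reindexing $j=k-1$ turns the right-hand side into $\lambda\,e_{\alpha}^{\lambda(a-t)}$, which is exactly (\ref{solpos}).

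The main obstacle is justifying the interchange of $^{RL}\hspace{-0.1cm}D_{a-}^{\alpha}$ with the infinite sum, since this operator involves both an improper integral and a subsequent $d/dt$. On any compact subset of $(-\infty,a)$ one has uniform convergence of the series and of the shifted integrand $(s-t)^{-\alpha}e_{\alpha}^{\lambda(a-s)}$ in $s\in(t,a)$; the singularity at $s=a$ is controlled by the integrable factor $(a-s)^{\alpha-1}$ coming from the leading term, while the singularity at $s=t$ is controlled by $\alpha<1$. This legitimizes first commuting the sum with the integral defining $I^{1-\alpha}_{a-}$, and then with $d/dt$, after which the termwise computation above closes the argument.
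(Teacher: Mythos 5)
Your proof is correct and follows essentially the same route as the paper: positivity is reduced to the complete monotonicity of the Mittag--Leffler function for negative arguments, and the identity (\ref{solpos}) is obtained by applying the power rule (\ref{deripot}) term by term to the series, which is exactly what the paper's one-line remark ``easily established using (\ref{deripot}) and linearity'' refers to (you merely make the interchange of sum and derivative explicit). The only cosmetic difference is that the paper deduces $E_{\alpha,\alpha}(-t)>0$ from Pollard's complete monotonicity of $E_{\alpha}(-t)$ via $E_{\alpha,\alpha}(-t)=-\alpha\tfrac{d}{dt}E_{\alpha}(-t)$, whereas you cite complete monotonicity of $E_{\alpha,\alpha}(-\,\cdot\,)$ directly; Pollard's theorem is stated for $E_{\alpha}$, and the version for $E_{\alpha,\alpha}$ is precisely the one-derivative corollary the paper writes out.
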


\begin{proof}
To verify that $e_{\alpha}^{\lambda (a-t)}$ is positive it suffices to prove that $E_{\alpha,\alpha}$ is positive. It is clear that $E_{\alpha,\alpha}(t)>0$ if $t\geq 0$. On the other hand, the function $E_{\alpha}(-t)$, with $t\geq 0$, is completely monotonic, namely
$$E_{\alpha}(-t)=\int_{0}^{\infty} e^{-ts}dF_{\alpha}(s),$$
where $F_{\alpha}(s)$ is a bounded function and $F^{\prime}_{\alpha}(s)\geq 0$ (moreover, an explicit expression for $F^{\prime}_{\alpha}(s)$ can be find in \cite{Pol}). With this we deduce
$$E_{\alpha,\alpha}(-t)=-\alpha\frac{d}{dt}E_{\alpha}(-t)=\alpha\int_{0}^{\infty} se^{-ts}dF_{\alpha}(s)> 0.$$

The equation (\ref{solpos}) is easily established using the property (\ref{deripot}) and the linearity of the fractional derivative.
\end{proof}

\begin{proposition} \label{convexpro}
Let $\gamma\geq 2$ and $u(t)$ be an absolutely continuous function on $(a,b)$. If
\begin{equation}
u(t)\geq u(0)\geq 0, \ \ t\in (a,b), \label{desjen}
\end{equation}
then
\begin{equation}
^{C}\hspace{-0.1cm}D_{a+}^{\alpha} u^{\gamma}(t) \leq \gamma u^{\gamma-1}(t)\, ^{C}\hspace{-0.1cm}D_{a+}^{\alpha} u(t), \ \ t\in (a,b). \label{covprop}
\end{equation}
\end{proposition}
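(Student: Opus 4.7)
The plan is to reduce the inequality to a pointwise convexity bound by re-expressing both Caputo derivatives in the Marchaud-type nonlocal form and then invoking the supporting-line inequality for the convex function $x \mapsto x^{\gamma}$ on $[0,\infty)$.

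The first step is to rewrite $^{C}\hspace{-0.1cm}D_{a+}^{\alpha}u(t)$ by integrating by parts in the defining integral (\ref{Capudef}), taking the primitive of $(t-s)^{-\alpha}$ and the derivative of $u$ in opposite roles. The boundary contribution at $s=t$ vanishes because $u$ is absolutely continuous, so $u(t)-u(s)=O(t-s)$, while $(t-s)^{-\alpha}$ is only mildly singular ($\alpha<1$); the contribution at $s=a$ produces the term $(u(t)-u(a))/[\Gamma(1-\alpha)(t-a)^{\alpha}]$. The outcome is the representation
\begin{equation*}
^{C}\hspace{-0.1cm}D_{a+}^{\alpha} u(t) \;=\; \frac{u(t)-u(a)}{\Gamma(1-\alpha)(t-a)^{\alpha}} \;+\; \frac{\alpha}{\Gamma(1-\alpha)}\int_{a}^{t} \frac{u(t)-u(s)}{(t-s)^{\alpha+1}}\,ds.
\end{equation*}
The same identity, applied to $u^{\gamma}$ (which is itself absolutely continuous on $(a,b)$ because $\gamma\geq 2$ and $u$ is bounded on each closed subinterval), gives the analogous representation for $^{C}\hspace{-0.1cm}D_{a+}^{\alpha}u^{\gamma}(t)$ with the numerators replaced by $u^{\gamma}(t)-u^{\gamma}(a)$ and $u^{\gamma}(t)-u^{\gamma}(s)$.

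The second step uses the convexity of $\phi(x)=x^{\gamma}$ on $[0,\infty)$, valid since $\gamma\geq 2$. The supporting-line inequality at the point $u(t)$ reads
\begin{equation*}
u^{\gamma}(t)-u^{\gamma}(s) \;\leq\; \gamma\, u^{\gamma-1}(t)\,\bigl(u(t)-u(s)\bigr),
\end{equation*}
and, crucially, it holds for any $u(s)\geq 0$ with no sign restriction on $u(t)-u(s)$. Hypothesis (\ref{desjen}) is used precisely to guarantee $u(s)\geq 0$ on $(a,b)$ so that this convexity inequality is available (and, as a by-product, it renders the boundary term $u(t)-u(a)$ nonnegative).

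Finally, because the weights $(t-a)^{-\alpha}$ and $(t-s)^{-\alpha-1}$ are positive and $\gamma u^{\gamma-1}(t)$ is independent of the integration variable, inserting the pointwise convexity bound into each term of the Marchaud representation of $^{C}\hspace{-0.1cm}D_{a+}^{\alpha}u^{\gamma}(t)$ and factoring out $\gamma u^{\gamma-1}(t)$ recovers exactly $\gamma u^{\gamma-1}(t)\,^{C}\hspace{-0.1cm}D_{a+}^{\alpha}u(t)$, which is (\ref{covprop}). I expect the only technically delicate point to be the rigorous justification of the integration-by-parts step and the Marchaud representation for a merely absolutely continuous $u$; if the reader prefers, this can be handled by mollifying $u$ to obtain a smooth approximation, proving (\ref{covprop}) for the smoothed function, and then passing to the limit using dominated convergence against the integrable kernel $(t-s)^{-\alpha}$.
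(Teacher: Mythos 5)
Your argument is correct in its core idea but follows a genuinely different route from the paper's. The paper starts from the difference $u^{\gamma-1}(t)\,^{C}\hspace{-0.1cm}D_{a+}^{\alpha}u(t)-\tfrac{1}{\gamma}\,^{C}\hspace{-0.1cm}D_{a+}^{\alpha}u^{\gamma}(t)$, uses Fubini to rewrite it as a double integral, recognizes the inner factor as $\tfrac12\tfrac{\partial}{\partial s}\bigl(\int_{a}^{s}u'(r)(t-r)^{-\alpha}dr\bigr)^{2}$ weighted by $u^{\gamma-2}(s)$, and after an integration by parts invokes (\ref{desjen}) to bound $u^{\gamma-2}(s)\geq u^{\gamma-2}(0)$ and land on a manifestly nonnegative integral of a square. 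You instead pass to the Marchaud (finite-difference) form of the Caputo derivative and apply the supporting-line inequality for the convex map $x\mapsto x^{\gamma}$ pointwise under the integral. Your route is the standard one for fractional inequalities of the type $^{C}\hspace{-0.1cm}D^{\alpha}\phi(u)\leq \phi'(u)\,^{C}\hspace{-0.1cm}D^{\alpha}u$ with $\phi$ convex, it is more transparent, and it uses strictly less than (\ref{desjen}): only $u\geq 0$ is needed to invoke convexity of $x^{\gamma}$ on $[0,\infty)$, whereas the paper's argument genuinely exploits $u(s)\geq u(0)$.

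There is one incorrect justification you should repair: absolute continuity does \emph{not} give $u(t)-u(s)=O(t-s)$ (that is a Lipschitz bound at $t$), so the boundary term at $s=t$ in your integration by parts need not vanish, and for rough $u'$ the Marchaud integral $\int_{a}^{t}(u(t)-u(s))(t-s)^{-\alpha-1}ds$ can even diverge; the proposed mollification also requires care, since $L^{1}$ convergence of $u_{\varepsilon}'$ does not by itself give convergence against the singular weight $(t-s)^{-\alpha}$. The clean fix stays entirely within your strategy and avoids both issues. Set $G(s):=u^{\gamma}(s)-u^{\gamma}(t)-\gamma u^{\gamma-1}(t)\bigl(u(s)-u(t)\bigr)$, so that $G\geq 0$ by convexity, $G(t)=0$, and the claim (\ref{covprop}) is exactly $\int_{a}^{t}G'(s)(t-s)^{-\alpha}ds\leq 0$. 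Writing $(t-s)^{-\alpha}=(t-a)^{-\alpha}+\alpha\int_{a}^{s}(t-\sigma)^{-\alpha-1}d\sigma$ and applying Fubini (legitimate as soon as both Caputo derivatives in (\ref{covprop}) exist as absolutely convergent integrals) gives
\begin{equation*}
\int_{a}^{t}\frac{G'(s)}{(t-s)^{\alpha}}\,ds=-\frac{G(a)}{(t-a)^{\alpha}}-\alpha\int_{a}^{t}\frac{G(\sigma)}{(t-\sigma)^{\alpha+1}}\,d\sigma\leq 0,
\end{equation*}
with no boundary term at $s=t$ to control. With that substitution your proof is complete and rigorous.
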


\begin{proof}
The Definition $(\ref{Capudef})$ and the use of Fubini's theorem imply
\begin{eqnarray*}
u^{\gamma-1}(t)\, ^{C}\hspace{-0.1cm}D_{a+}^{\alpha} u(t) - \frac{1}{\gamma} \, ^{C}\hspace{-0.1cm}D_{a+}^{\alpha} u^{\gamma}(t)&=& \frac{1}{\Gamma(1-\alpha)} \int_{a}^{t} \frac{1}{(t-r)^{\alpha}}\left[ u^{\gamma-1}(t)u^{\prime}(r)-u^{\gamma-1}(r)u^{\prime}(r)\right]dr\\
&=& \frac{1}{\Gamma(1-\alpha)} \int_{a}^{t} \frac{u^{\prime}(r)}{(t-r)^{\alpha}}   
\int_{r}^{t} \frac{d u^{\gamma-1}}{ds}(s)dsdr\\
&=&\frac{1}{\Gamma(1-\alpha)} \int_{a}^{t} (\gamma-1)u^{\gamma-2}(s)u^{\prime}(s)  
\int_{a}^{s} \frac{u^{\prime}(r)}{(t-r)^{\alpha}}drds.
\end{eqnarray*}
Inasmuch as
$$\frac{\partial}{\partial s}\left( \int_{a}^{s}\frac{u^{\prime}(r)}{(t-r)^{\alpha}}dr\right)^{2} = \frac{2u^{\prime}(s)}{(t-s)^{\alpha}}  \int_{a}^{s}\frac{u^{\prime}(r)}{(t-r)^{\alpha}}dr,$$
then 
\begin{eqnarray*}
u^{\gamma-1}(t)\, ^{C}\hspace{-0.1cm}D_{a+}^{\alpha} u(t) - \frac{1}{\gamma} \, ^{C}\hspace{-0.1cm}D_{a+}^{\alpha} u^{\gamma}(t)&=& \frac{\gamma-1}{2 \Gamma(1-\alpha)} \int_{a}^{t}u^{\gamma-2}(s)(t-s)^{\alpha} \frac{\partial}{\partial s}\left( \int_{a}^{s}\frac{u^{\prime}(r)}{(t-r)^{\alpha}}dr\right)^{2}ds.
\end{eqnarray*}
In this way, the usual integration by parts and the inequality (\ref{desjen}) yield
\begin{eqnarray*}
u^{\gamma-1}(t)\, ^{C}\hspace{-0.1cm}D_{a+}^{\alpha} u(t) - \frac{1}{\gamma} \, ^{C}\hspace{-0.1cm}D_{a+}^{\alpha} u^{\gamma}(t)&\geq& \frac{(\gamma-1)u^{\gamma-2}(0)}{2 \Gamma(1-\alpha)} \int_{a}^{t}(t-s)^{\alpha} \frac{\partial}{\partial s}\left( \int_{a}^{s}\frac{u^{\prime}(r)}{(t-r)^{\alpha}}dr\right)^{2}ds\\
&=& \frac{\alpha(\gamma-1)u^{\gamma-2}(0)}{2 \Gamma(1-\alpha)} \int_{a}^{t}(t-s)^{\alpha-1} \left( \int_{a}^{s}\frac{u^{\prime}(r)}{(t-r)^{\alpha}}dr\right)^{2}ds.
\end{eqnarray*}
From here, the result evidently follows.
\end{proof}

The hypothesis $(\ref{desjen})$ in Proposition \ref{convexpro} is not necessary when $\gamma=2$. In this case, the following references \cite{Al2010} or \cite{ADG2014} could be consulted.

\section{Proof of the main result}

Next, our first step will be to give an estimate of the blow-up time for a fractional inequality. As is common, for $p\geq 1$ we denote by $\tilde{p}\in (1,\infty]$ the conjugate index of $p$ defined by means of
$$\frac{1}{p}+\frac{1}{\tilde{p}}=1.$$

\begin{proposition} \label{propblow}
Let $K>0$, $q\geq 0$ and $p\geq 1$. Let $u(t)$ be a non-negative solution of the following Caputo's fractional inequality
\begin{equation}
{^C}\hspace{-0.1cm}D^{\alpha}_{0+}u(t)\geq K t^{q}u^{p}(t), \ \ t>0,  \label{desgral}
\end{equation}
with initial condition
\begin{equation*}
u(0)=u_{0}> 0.
\end{equation*}
If $q+1>q \tilde{p}$, then $u(t)$ blows-up in finite time; moreover, the blow-up time $\tau_{u}$ is less than or equal to
\begin{equation}
\tau (u_{0},q,p):= \left( \frac{\Gamma(q(1-\tilde{p})+1)}{(u_{0})^{p}\Gamma(q+1)} B(\lambda_{m}) \right) ^{1/(\tilde{p}(\alpha +q))}, \label{detau}
\end{equation}
where $B(\lambda_{m})=\min \{B(\lambda): \lambda>\alpha \tilde{p}-1\}$ with 
\begin{equation}
B(\lambda):=\frac{\Gamma(\lambda+1)^{\tilde{p}-1}\,\Gamma(\lambda +1 - \alpha \tilde{p}) \, \Gamma(q+\lambda +2)}{\Gamma(\lambda +1 - \alpha)^{\tilde{p}} \,\Gamma(q+ \lambda+2-\tilde{p}(q+\alpha))}. \label{defB}
\end{equation}
\end{proposition}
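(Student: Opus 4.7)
The plan is to adapt the Mitidieri--Pohozaev nonlinear capacity (test-function) method to the Caputo fractional setting. The idea is to convert the pointwise inequality (\ref{desgral}) into an integral inequality by pairing it against a compactly supported weight on $[0,T]$, use fractional integration by parts to move the derivative off $u$, and then apply H\"older's inequality to reach an algebraic inequality in $T$ whose form provides the upper bound on the blow-up time.

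First, since the right-hand side of (\ref{desgral}) is non-negative, $^{C}\hspace{-0.1cm}D^{\alpha}_{0+}u(t) \geq 0$, and the identity (\ref{intder}) forces $u(t) \geq u_{0} > 0$ on any interval of existence. Fix such a $T$ and a parameter $\lambda > \alpha\tilde{p}-1$, and multiply (\ref{desgral}) by the test function $\varphi(t) = (T-t)^{\lambda}$. Applying Fubini to exchange the order of integration inside the Caputo derivative and then performing one classical integration by parts (the $T$-boundary term drops out because $\lambda + 1 - \alpha > 0$) expresses the resulting double integral as $\int_{0}^{T}u(t)\,^{RL}\hspace{-0.1cm}D^{\alpha}_{T-}\varphi(t)\,dt$ minus a boundary contribution proportional to $u_{0}T^{\lambda+1-\alpha}$. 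Using (\ref{deripot}) to evaluate $^{RL}\hspace{-0.1cm}D^{\alpha}_{T-}\varphi$ and discarding the non-negative boundary term, one obtains
$$\frac{\Gamma(\lambda+1)}{\Gamma(\lambda+1-\alpha)}\int_{0}^{T}u(t)(T-t)^{\lambda-\alpha}\,dt \;\geq\; K\,J(T),$$
where $J(T) := \int_{0}^{T}t^{q}u^{p}(t)(T-t)^{\lambda}\,dt$. Splitting the integrand on the left as $\bigl[t^{q/p}u(t)(T-t)^{\lambda/p}\bigr]\cdot\bigl[t^{-q/p}(T-t)^{\lambda/\tilde{p}-\alpha}\bigr]$ and applying H\"older with conjugate exponents $p$ and $\tilde{p}$, the auxiliary factor becomes a Beta integral whose integrability at $t=0$ is ensured precisely by $q+1>q\tilde{p}$ and at $t=T$ by $\lambda > \alpha\tilde{p}-1$. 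This produces $K\,J(T) \leq C(\lambda)\,T^{(q+\lambda+1-\tilde{p}(q+\alpha))/\tilde{p}}J(T)^{1/p}$, whence $J(T)^{1/\tilde{p}} \leq K^{-1}C(\lambda)\,T^{(q+\lambda+1-\tilde{p}(q+\alpha))/\tilde{p}}$.

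On the other hand, $u \geq u_{0}$ supplies the elementary lower bound $J(T) \geq u_{0}^{p}\,\frac{\Gamma(q+1)\Gamma(\lambda+1)}{\Gamma(q+\lambda+2)}\,T^{q+\lambda+1}$. Matching this with the upper bound on $J(T)$, the powers of $T$ telescope and the Gamma quotients regroup into exactly the constant $B(\lambda)$ of (\ref{defB}), giving
$$T^{\tilde{p}(\alpha+q)} \;\leq\; \frac{\Gamma(q(1-\tilde{p})+1)}{u_{0}^{p}\,\Gamma(q+1)}\,B(\lambda).$$
Taking the infimum over $\lambda > \alpha\tilde{p}-1$---the minimum $B(\lambda_{m})$ is attained because $B$ is continuous on this interval and, as Stirling's formula shows, diverges at both endpoints---produces $T \leq \tau(u_{0},q,p)$. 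Since $T$ was an arbitrary time of existence, the blow-up time $\tau_{u}$ satisfies the same bound, as claimed. The principal difficulty I anticipate is the careful bookkeeping of Gamma factors through the H\"older step and the Beta integral so that they reassemble into exactly the expression (\ref{defB}); the conceptual role of the hypothesis $q+1 > q\tilde{p}$ is precisely to guarantee integrability at the origin of the auxiliary Beta integral in H\"older's inequality.
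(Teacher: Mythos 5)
Your proposal is correct and follows essentially the same route as the paper: the same test function $(s-t)^{\lambda}$ with $\lambda>\alpha\tilde{p}-1$, fractional integration by parts via $^{RL}\hspace{-0.1cm}D^{\alpha}_{s-}$, the lower bound $u\geq u_{0}$ from (\ref{intder}), and optimization over $\lambda$; the only (immaterial) difference is that you apply H\"older's inequality directly where the paper uses the $\varepsilon$-Young inequality and then minimizes over $\varepsilon$, which yields the identical constant. Your added remark that $B$ is continuous and diverges at both endpoints of $(\alpha\tilde{p}-1,\infty)$ is a small bonus, since the paper only verifies the existence of the minimizer graphically in the examples.
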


\begin{proof}
We will use the well known capacity method, see for example \cite{KFT2017}. Let us assume that the solution $u(t)$ of the inequality (\ref{desgral}) is defined  in $[0,s]$, for some
\begin{equation}
s>\tau (u_{0},q,p). \label{exsol}
\end{equation}
For $\lambda >\alpha \tilde{p}-1$ let us introduce the test function
\begin{equation}
\varphi_{s}(t):=\frac{(s-t)^{\lambda}}{s^{\lambda}}, \ \ t\in (0,s).
\end{equation}
Multiplying the inequality (\ref{desgral}) by $\varphi_{s}$ and integrating from $0$ to $s$ gives
\begin{equation}
\int_{0}^{s}\varphi_{s}(t) K t^{q} u^{p}(t)dt \leq \int_{0}^{s} \varphi_{s}(t) \, ^{C}\hspace{-0.1cm}D_{0+}^{\alpha}u(t)dt.
\end{equation}
Using the integration by parts formula for the Caputo fractional derivative (see \cite{Agr2007} or \cite{KFT2017}) we have
\begin{equation}
\int_{0}^{s} \varphi_{s}(t) \, ^{C}\hspace{-0.1cm} D_{0+}^{\alpha}u(t)dt= \int_{0}^{s} u(t) \, ^{RL}\hspace{-0.1cm}D_{s-}^{\alpha}\varphi_{s}(t)dt - \frac{\Gamma(\lambda+1)}{\Gamma(\lambda-\alpha+2)} s^{1-\alpha} u(0).
\end{equation}
Using the formula
\begin{eqnarray}
\, ^{RL}\hspace{-0.1cm}D^{\alpha}_{s-} \varphi_{s}(t)
&=&\frac{\Gamma(\lambda+1)}{\Gamma(\lambda+1-\alpha)} \cdot \frac{\varphi_{s}(t)}{(s-t)^{\alpha}}, \ \ 0\leq t < s,
\end{eqnarray}
and $u(0)\geq 0$, we obtain
\begin{eqnarray*}
\int_{0}^{s}\varphi_{s}(t) K t^{q} u^{p}(t)dt &\leq& 
\frac{\Gamma(\lambda+1)K}{\Gamma(\lambda+1-\alpha)} \int_{0}^{s} u(t)  \frac{\varphi_{s}(t)}{(s-t)^{\alpha}} dt\\
&=& \frac{\Gamma(\lambda+1)K}{\Gamma(\lambda+1-\alpha)} \int_{0}^{s} \varphi_{s}(t)^{\frac{1}{p}} t^{\frac{q}{p}} u(t) \cdot \frac{\varphi_{s}(t)^{1-\frac{1}{p}}t^{-\frac{q}{p}}}{(s-t)^{\alpha}} dt.
\end{eqnarray*}
Employing the epsilon Young inequality, with $\varepsilon \in (0,\Gamma(\lambda+1 - \alpha)/\Gamma(\lambda+1)]$, see Appendix B in \cite{Eva}, we get
\begin{eqnarray*}
\int_{0}^{s}\varphi_{s}(t) K t^{q} u^{p}(t)dt &\leq&  \frac{\Gamma(\lambda+1)K}{\Gamma(\lambda+1-\alpha)} \left\lbrace 
\varepsilon \int_{0}^{s} \varphi_{s}(t) t^{q} u(t)^{p}dt + C(\varepsilon) \int_{0}^{s} \frac{\varphi_{s}(t)^{\tilde{p}\left( 1-\frac{1}{p}\right) }t^{-\frac{q \tilde{p}}{p}}}{(s-t)^{\alpha \tilde{p}}} dt\right\rbrace,
\end{eqnarray*}
where 
\begin{equation}
C(\varepsilon)=\frac{1}{(\varepsilon p)^{\tilde{p}/p}\, \tilde{p}}. \label{ceps}
 \end{equation}
An elementary algebraic manipulation leads us to the inequality
\begin{eqnarray}
\int_{0}^{s}\varphi_{s}(t) K t^{q} u^{p}(t)dt \left(1-\frac{\varepsilon\Gamma(\lambda+1)}{\Gamma(\lambda+1-\alpha)} \right) 
&\leq&  \frac{C(\varepsilon)\Gamma(\lambda+1)K}{\Gamma(\lambda+1-\alpha)} \int_{0}^{s} \frac{\varphi_{s}(t)t^{q(1-\tilde{p})}}{(s-t)^{\alpha \tilde{p}}} dt \nonumber \\
&=&\frac{C(\varepsilon)\Gamma(\lambda+1)K}{\Gamma(\lambda+1-\alpha)} \cdot B(\lambda+1-\alpha \tilde{p},q(1-\tilde{p})+1) \nonumber \\
&& \cdot \ s^{1-\alpha \tilde{p}+q(1-\tilde{p})}. \label{estIarr}
\end{eqnarray}

Otherwise, applying the operator $I^{\alpha}_{0+}$ to the inequality (\ref{desgral}) we get, by (\ref{intder}),
\begin{equation}
u(t)\geq u(0)+I^{\alpha}_{0+}(Kt^{q}u(t)^{p})\geq u_{0}, \ \ 0<t<s, \label{estpci}
\end{equation}
then
\begin{eqnarray*}
\int_{0}^{s}\varphi_{s}(t) K t^{q} u^{p}(t)dt 
&\geq& (u_{0})^{p} K \int_{0}^{s}\varphi_{s}(t) t^{q}dt \\
&=& (u_{0})^{p} K B(q+1,\lambda+1) \, s^{1+q}. 
\end{eqnarray*}
This inequality and (\ref{estIarr}) imply
\begin{eqnarray}
s^{\tilde{p}(\alpha+q)}&\leq& \frac{C(\varepsilon)\Gamma(\lambda+1)}{\Gamma(\lambda+1-\alpha)-\varepsilon \Gamma(\lambda+1)} \cdot \frac{B(\lambda+1-\alpha \tilde{p},q(1-\tilde{p})+1)}{u(0)^{p}B(q+1,\lambda+1)} \nonumber \\
&=& \frac{\Gamma(q(1-\tilde{p})+1)}{u(0)^{p}\Gamma(q+1)} \cdot \frac{\Gamma(\lambda +1 - \alpha \tilde{p}) \Gamma(q+\lambda +2)}{\Gamma(q+\lambda+2-\tilde{p}(q+\alpha))} \cdot H(\varepsilon), \label{ears}
\end{eqnarray}
where 
$$H(\varepsilon):=\frac{C(\varepsilon)}{\Gamma(\lambda +1 - \alpha)-\varepsilon \Gamma(\lambda+1)}.$$
Since the right hand side of inequality (\ref{ears}) is valid for any $0<\varepsilon < \Gamma(\lambda+1 - \alpha)/\Gamma(\lambda+1)$, then we use the identity (\ref{ceps}) to minimize the function $H(\varepsilon)$ to get
\begin{eqnarray}
s &\leq& \left( \frac{\Gamma(q(1-\tilde{p})+1)}{(u_{0})^{p}\Gamma(q+1)} \cdot  B(\lambda) \right) ^{1/(\tilde{p}(\alpha+q))}, \ \ \lambda > \alpha \tilde{p}-1, \label{estima2}
\end{eqnarray}
where $B(\lambda)$ is defined in (\ref{defB}). Taking the infimum over $\lambda$ in the above inequality we deduce that $s\leq \tau (u_{0},q,p)$, contradicting the inequality (\ref{exsol}). In this way, the desired result is achieved.
\end{proof}

We are now in a position to state and prove our main result.

\begin{theorem} \label{ThM}
Let $\alpha \in (0,1)$ and $q_{1}$, $q_{2}$, $p_{11}$, $p_{12}$, $p_{21}$, $p_{22}$ be non-negative real numbers. We will consider the system of Caputo's fractional differential equations,
\begin{eqnarray}
^{C}\hspace{-0.1cm}D_{0+}^{\alpha}x(t)&= &t^{q_{1}}x^{p_{11}}(t)y^{p_{12}}(t), \ \ t>0, \label{nsis1}\\
^{C}\hspace{-0.1cm}D_{0+}^{\alpha}y(t)&=&t^{q_{2}}y^{p_{21}}(t)x^{p_{22}}(t), \ \ t>0,\label{nsis2} 
\end{eqnarray}
with initial conditions
\begin{equation*}
x(0)=x_{0}>0, \ \ y(0)=y_{0}>0.
\end{equation*}
\begin{description}
\item[Case $q_{1} \neq q_{2}$:] Let us set $q_{i}:=\min\{q_{1},q_{2}\}$, $j:=3-i$ and
$$p_{j}:=p_{ji}+p_{jj}\gamma_{j},$$
where
$$\gamma_{j}:=\frac{p_{ij}+1-p_{ji}}{2}.$$
If 
\begin{equation}
p_{ij}\geq 3 + p_{ji}, \ \  p_{ii}+1\geq p_{jj}, \ \ q_{j}+1>q_{j} \tilde{p}_{j}, \label{casog}
\end{equation}
then the solution of system (\ref{nsis1})-(\ref{nsis2}) blows-up in finite time. Moreover, the blow-up time, $\tau_{xy}$, is less than or equal to $\tau (u_{j},q_{j},p_{j})$, where $\tau (u_{0},p,q)$ is defined in (\ref{detau}) and 
\begin{eqnarray*}
u_{j}&=&
\begin{cases}
x_{0}, & j = 1,\\
y_{0}, & j=2.
\end{cases}
\end{eqnarray*}
\item[Case $q_{1} = q_{2}$:] In this case, if 
\begin{equation}
p_{22}\geq 3 + p_{11}, \ \ p_{21}+1\geq p_{12}, \ \ q_{1}+1>q_{1} \tilde{p}_{1}, \label{caso1}
\end{equation}
then the solution of system (\ref{nsis1})-(\ref{nsis2}) blows-up in finite time and the blow-up time $\tau_{xy}$ is less than or equal to $\tau(x_{0},q_{1},p_{1})$, where 
$$p_{1}:=p_{11}+p_{12}\gamma_{1}, \ \ \gamma_{1}:=\frac{p_{22}+1-p_{11}}{2}.$$
Or, if 
\begin{equation}
p_{12}\geq 3 + p_{21}, \ \ p_{11}+1\geq p_{22}, \ \ q_{2}+1>q_{2} \tilde{p}_{2}, \label{caso2}
\end{equation}
then the solution of system (\ref{nsis1})-(\ref{nsis2}) blows-up in finite time and the blow-up time $\tau_{xy}$ is less than or equal to $\tau(y_{0},q_{2},p_{2})$, where
$$ p_{2}:=p_{21}+p_{22}\gamma_{2}, \ \ \gamma_{2}:=\frac{p_{12}+1-p_{21}}{2}.$$
\end{description}

\end{theorem}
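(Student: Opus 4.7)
The plan is to reduce the coupled system (\ref{nsis1})--(\ref{nsis2}) to a scalar Caputo fractional inequality of the form (\ref{desgral}), so that Proposition \ref{propblow} applies to a single component. By the symmetry of the hypotheses I consider only the subcase $q_1<q_2$ of Case $q_1\neq q_2$ (so $i=1$, $j=2$); the subcase $q_1>q_2$ and Case $q_1=q_2$ follow after swapping the roles of $x$ and $y$. The key step is to establish the pointwise comparison
\begin{equation*}
x(t)\ge y(t)^{\gamma_2}, \qquad 0<t<\tau_{xy}.
\end{equation*}
Granted this, equation (\ref{nsis2}) yields ${}^{C}\hspace{-0.1cm}D_{0+}^{\alpha}y(t)\ge t^{q_2}y(t)^{p_{21}+p_{22}\gamma_2}=t^{q_2}y(t)^{p_2}$, and Proposition \ref{propblow} with $K=1$, $q=q_2$, $p=p_2$ and initial value $y_0$ produces finite-time blow-up of $y$---hence of the pair $(x,y)$---with the bound $\tau_{xy}\le\tau(y_0,q_2,p_2)$. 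The admissibility condition $q_2+1>q_2\tilde{p}_2$ demanded by Proposition \ref{propblow} is precisely the third hypothesis in (\ref{casog}).

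To produce the comparison I plan to work with the auxiliary function $w(t):=y(t)^{\gamma_2}-x(t)$ and show $w\le 0$ throughout the interval of existence. Applying $I_{0+}^{\alpha}$ to (\ref{nsis1})--(\ref{nsis2}) and using (\ref{intder}) first shows $x(t)\ge x_0>0$ and $y(t)\ge y_0>0$ on that interval, since the right-hand sides of (\ref{nsis1})--(\ref{nsis2}) are non-negative. The first hypothesis $p_{12}\ge 3+p_{21}$ forces $\gamma_2\ge 2$, so Proposition \ref{convexpro} applies to $u=y$ with exponent $\gamma_2$ and gives
\begin{equation*}
{}^{C}\hspace{-0.1cm}D_{0+}^{\alpha}y^{\gamma_2}(t)\le \gamma_2\,y(t)^{\gamma_2-1}\,{}^{C}\hspace{-0.1cm}D_{0+}^{\alpha}y(t)=\gamma_2\,t^{q_2}\,y(t)^{\gamma_2+p_{21}-1}\,x(t)^{p_{22}}.
\end{equation*}
Subtracting (\ref{nsis1}) and using the algebraic identity $p_{12}=2\gamma_2+p_{21}-1$ (built into the definition of $\gamma_2$) to split $y^{p_{12}}=y^{\gamma_2}\cdot y^{\gamma_2+p_{21}-1}$, I arrive at the fractional inequality
\begin{equation*}
{}^{C}\hspace{-0.1cm}D_{0+}^{\alpha}w(t)\le y(t)^{\gamma_2+p_{21}-1}\bigl[\gamma_2\,t^{q_2}\,x(t)^{p_{22}}-t^{q_1}\,x(t)^{p_{11}}\,y(t)^{\gamma_2}\bigr],
\end{equation*}
which I expect is the key inequality alluded to in the introduction.

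The main obstacle is to promote this fractional differential inequality to the pointwise conclusion $w\le 0$. The second hypothesis $p_{11}+1\ge p_{22}$ should be decisive here: wherever $w\le 0$ holds, it permits the bracket to be dominated by a non-positive expression after an elementary algebraic rearrangement that absorbs the temporal factor $t^{q_2-q_1}$ (with $q_2>q_1$). To close the argument I plan to multiply by the positive Mittag--Leffler kernel $e_{\alpha}^{\lambda(a-t)}$ of (\ref{defz}), integrate over the interval of existence, and exploit the identity (\ref{solpos}) together with (\ref{intder}) in a fractional maximum-principle fashion, so as to read off the sign of $w$ from the initial data. A subtlety that will require care is that $w(0)=y_0^{\gamma_2}-x_0$ has no definite sign under the hypotheses, so a short initial transient may need to be handled separately---for instance by means of the quantitative lower bound $x(t)\ge x_0+C\,t^{\alpha+q_1}$ obtained by fractional integration of (\ref{nsis1}).

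Case $q_1=q_2$ is handled by the same method applied to whichever equation is favoured by the hypothesis: under (\ref{caso1}) one establishes $y\ge x^{\gamma_1}$ and reduces the $x$-equation to obtain $\tau_{xy}\le\tau(x_0,q_1,p_1)$, while under (\ref{caso2}) one establishes $x\ge y^{\gamma_2}$ and reduces the $y$-equation to obtain $\tau_{xy}\le\tau(y_0,q_2,p_2)$.
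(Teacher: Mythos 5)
Your overall strategy --- compare the components via the power $\gamma_2$, use Proposition \ref{convexpro} to handle ${}^{C}\hspace{-0.1cm}D_{0+}^{\alpha}y^{\gamma_2}$, run a fractional maximum principle with the kernel (\ref{defz}), and feed the resulting scalar inequality into Proposition \ref{propblow} --- is exactly the paper's strategy. But the comparison you set out to prove, $x(t)\ge y(t)^{\gamma_2}$ with constant $1$, is not what the argument can deliver, and your route to it breaks at the decisive step. First, as you yourself note, $w(0)=y_0^{\gamma_2}-x_0$ may be positive, so $w\le 0$ can fail on an initial interval; the proposed lower bound $x(t)\ge x_0+Ct^{\alpha+q_1}$ does not cure this, and once the comparison fails near $t=0$ the reduction to Proposition \ref{propblow} with initial value $y_0$ is compromised. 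Second, and more seriously, the sign analysis of the bracket goes the wrong way: to make $\gamma_2 t^{q_2}x^{p_{22}}-t^{q_1}x^{p_{11}}y^{\gamma_2}$ non-positive you need the \emph{lower} bound $y^{\gamma_2}\ge \gamma_2 t^{q_2-q_1}x^{p_{22}-p_{11}}$, whereas the region $w\le 0$ only supplies the \emph{upper} bound $y^{\gamma_2}\le x$. Using $p_{11}+1\ge p_{22}$ and $x\ge x_0$ turns the requirement into $x\le C^{-1}y^{\gamma_2}$ --- the reverse of the comparison you are trying to establish --- so no elementary rearrangement closes this.

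The paper repairs both defects with one device: it works with $J(s)=Mx(s)-y^{\gamma_2}(s)$ for a large constant $M$ and adds the damping term $h(s)J(s)$ with $h(s)=s^{q_2}x^{p_{11}}(s)y^{\gamma_2-1+p_{21}}(s)$. Because $2\gamma_2-1+p_{21}=p_{12}$, the product $hJ$ contributes exactly $Ms^{q_2}x^{p_{11}+1}y^{\gamma_2-1+p_{21}}-s^{q_2}x^{p_{11}}y^{p_{12}}$, so that
\begin{equation*}
{}^{C}\hspace{-0.1cm}D_{0+}^{\alpha}J(s)+h(s)J(s)\ \ge\ x^{p_{11}}y^{p_{12}}\bigl(Ms^{q_1}-s^{q_2}\bigr)+s^{q_2}y^{\gamma_2-1+p_{21}}\bigl(Mx^{p_{11}+1}-\gamma_2 x^{p_{22}}\bigr),
\end{equation*}
and both parentheses are made positive --- with no sign information on $J$ --- by choosing $M>\max\{t^{q_2-q_1},\,\gamma_2 x_0^{p_{22}-p_{11}-1},\,y_0^{\gamma_2}x_0^{-1}\}$; the last requirement also gives $J(0)>0$, curing the initial transient. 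The Mittag--Leffler argument you sketch, applied on $[0,\sup A)$ with $A=\{r:J>0\ \text{on}\ [0,r]\}$ and using (\ref{solpos}), then yields $J\ge 0$ on the whole interval, i.e.\ $x\ge M^{-1}y^{\gamma_2}$, hence ${}^{C}\hspace{-0.1cm}D_{0+}^{\alpha}y\ge M^{-p_{22}}s^{q_2}y^{p_2}$. Since the bound $\tau(u_0,q,p)$ of (\ref{detau}) does not depend on the constant $K$ in Proposition \ref{propblow}, the factor $M^{-p_{22}}$ is harmless and the stated bound $\tau(y_0,q_2,p_2)$ follows. Without the constant $M$ and the term $hJ$, your argument has a genuine gap at its central step.
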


\begin{proof}
The local existence of a positive solution $(x(t),y(t))$ of system (\ref{nsis1})-(\ref{nsis2}) is obtained by proceeding as in the classical case (i.e., the non-fractional one). Indeed, instead of treating the system of fractional differential equations, the associated system of integral equations is studied (see formula (3.5.4) in \citep{KST2002}). A first step consists of applying a version of Banach's contraction principle to a mapping, determined by the system of integral equations, in order to find a positive solution $(x(t),y(t))$ in a certain interval $[0,\delta)$, that is, we obtain a local solution (see Theorem 3.25 in \citep{KST2002}, for example). For the second step, let $[0,t_{\max})$ be the maximum interval of existence of the solution $(x(t),y(t))$. If $t_{\max}< \infty$, then $\lim_{t\uparrow t_{\max}} \, \sup\{x(s)+y(s):s\in[0,t]\} =\infty$. If such a limit were finite, then we can proceed as in the first step and deduce that we can extend the solution to the time interval $[0,t_{\max}+\delta)$, which is absurd (see, for example, the proof of Theorem $1.4$ in Chapter $6$ of \cite{Pazy}). Therefore, the maximum time of existence of the solution is precisely the blow-up (explosion) time.

Now, we will concentrate on finding upper bounds for the blow-up time $\tau_{xy}$ of the system (\ref{nsis1})-(\ref{nsis2}). Without loss of generality, we will suppose that we have $q_{2}\geq q_{1}$, $p_{12}\geq 3 + p_{21}$, $p_{11}+1\geq p_{22}$ and $q_{2}+1>q_{2} \tilde{p}_{2}$; in the other cases the procedure is similar, therefore we omit them. We will see that the blow-up time $\tau_{xy}$ of the system is less than or equal to $\tau (y_{0},q_{2},p_{2})$. We proceed by contradiction, that is, $\tau(y_{0},q_{2},p_{2})<\tau_{xy}$, then the system  (\ref{nsis1})-(\ref{nsis2}) has a solution $(x(s),y(s))$ in $[0,t]$, for some 
\begin{equation}
\tau(y_{0},q_{2},p_{2})<t<\tau_{xy}. \ \ \label{conpblow}
\end{equation}
Under this assumption it makes sense to introduce the following function
$$J(s)=Mx(s)-y^{\gamma_{2}}(s), \ \ 0 \leq s \leq t,$$
where
$M$ is a positive constant that will be fixed letter. Observe that the hypotheses imply $\gamma_{2}\geq 2$ and this is a condition necessary to apply Proposition \ref{convexpro}. Using the linearity of the Caputo's fractional derivative, (\ref{covprop}), (\ref{nsis1}) and (\ref{nsis2}) we obtain
\begin{eqnarray*}
^{C}\hspace{-0.1cm}D_{0+}^{\alpha}J (s) &=& M\,  ^{C}\hspace{-0.1cm}D_{0+}^{\alpha} x(s) - \, ^{C}\hspace{-0.1cm}D_{0+}^{\alpha} y^{\gamma_{2}}(s)\\
 &\geq &   M \, ^{C}\hspace{-0.1cm}D_{0+}^{\alpha}  x(s) - \gamma_{2} \, y^{\gamma_{2}-1}(s)\, ^{C}\hspace{-0.1cm}D_{0+}^{\alpha} y(s)\\
 &=& M s^{q_{1}}x^{p_{11}}(s)y^{p_{12}}(s) - \gamma_{2} \, s^{q_{2}}x^{p_{22}}(s)y^{\gamma_{2}-1+p_{21}}(s).
\end{eqnarray*}
Setting 
$$h(s):=s^{q_{2}} x^{p_{11}}(s) y^{\gamma_{2}-1+p_{21}}(s), \ \ 0\leq s \leq t,$$
the above inequality implies
\begin{eqnarray*}
^{C}\hspace{-0.1cm}D_{0+}^{\alpha}J (s) + h(s) J(s)&\geq & x^{p_{11}}(s)y^{p_{12}}(s)(M s^{q_{1}}-s^{q_{2}})\\
&& + \ s^{q_{2}}y^{\gamma_{2}-1+p_{21}}(s)(M x^{p_{11}+1}(s)-\gamma_{2} \, x^{p_{22}}(s)).
\end{eqnarray*}
Proceeding as in the proof of (\ref{estpci}) we can verify that
$$x(s)\geq x_{0}>0, \ \ 0 \leq s \leq t.$$
If we take
\begin{eqnarray*}
M &>& \max\left\lbrace s^{q_{2}-q_{1}}, \gamma_{2} \, x^{p_{22}-p_{11}-1}(s): 0\leq s \leq t \right\rbrace\\
&=&\max\left\lbrace t^{q_{2}-q_{1}}, \gamma_{2} \, (x_{0})^{p_{22}-p_{11}-1} \right\rbrace,
\end{eqnarray*}
then
\begin{eqnarray*}
^{C}\hspace{-0.1cm}D_{0+}^{\alpha}J (s) +h(s)J(s)>0, \ \ 0\leq s \leq t.
\end{eqnarray*}
Furthermore, if we take $M > (y_{0})^{\gamma_{2}} (x_{0})^{-1}$, then $J(0)>0$.

Let us set
\begin{equation*}
A:=\{r\in [0,t] : J(s)> 0, \ \text{for all} \ s\in [0,r]\}.
\end{equation*}
Notice that $0\in A$. Let us suppose that $a:=\sup A <t$. This implies
\begin{eqnarray}
^{C}\hspace{-0.1cm}D_{0+}^{\alpha}J (s) +||h|| J(s) > 0, \ \ 0\leq s < a, \label{estj}
\end{eqnarray}
where $||h||:=\sup\{|h(s)|:0\leq s \leq t\}$.
We will consider the function $e_{\alpha}^{-||h|| (a-s)}>0$ defined in (\ref{defz}). Multiplying by $e_{\alpha}^{-||h|| (a-s)}$ on both sides of the inequality (\ref{estj}) and integrating with respect to $s$ we obtain 
\begin{eqnarray*}
0&<&\int_{0}^{a} e_{\alpha}^{-||h|| (a-s)}\, \left[ ^{C}\hspace{-0.1cm}D_{0+}^{\alpha} J(s)+||h|| J(s)\right] ds\\
&=&\int_{0}^{a} e_{\alpha}^{-||h|| (a-s)}\, ^{C}\hspace{-0.1cm}D_{0+}^{\alpha} J(s)ds + \int_{0}^{a} e_{\alpha}^{-||h|| (a-s)} ||h|| J(s)ds\\
&\leq& \int_{0}^{a} J(s)\, ^{RL}\hspace{-0.1cm}D_{a-}^{\alpha} e_{\alpha}^{-||h|| (a-s)} \,ds + \int_{0}^{a} J(s) ||h|| e_{\alpha}^{-||h|| (a-s)} \,ds\\
&=& \int_{0}^{a} J(s)\, \left[ ^{RL}\hspace{-0.1cm}D_{a-}^{\alpha} e_{\alpha}^{-||h|| (a-s)} + ||h||\, e_{\alpha}^{-||h|| (a-s)} \right]  ds=0.
\end{eqnarray*}
This contradiction yields $a=t$, then $J(s)\geq 0$, $0\leq s \leq t$, namely
\begin{equation*}
x(s)\geq \frac{1}{M}\, y^{\gamma_{2}}(s), \ \ 0 \leq s \leq t.
\end{equation*}
From the above inequality and (\ref{nsis2}) we arrive at
\begin{equation*}
^{C}\hspace{-0.1cm}D_{0+}^{\alpha} y(s)\geq \frac{1}{M^{p_{22}}}\, s^{q_{2}}y^{p_{21}+p_{22}\gamma_{2}}(s) , \ \ 0 \leq s \leq t. 
\end{equation*}
If we take $K=M^{-p_{22}}$, $q=q_{2}$, $p=p_{21}+p_{22}\gamma_{2}$ and $u_{0}=y_{0}$ in Proposition \ref{propblow} we conclude that the system (\ref{nsis1})-(\ref{nsis2}) can not have a solution on $[0,t]$, otherwise we have the inequality, $t\leq \tau(u_{0},q,p)=\tau(y_{0},q_{2},p_{2})$. But this contradicts (\ref{conpblow}), as expected.
\end{proof}

\section{Numerical experiments}

To find the solution of the system (\ref{nsis1})-(\ref{nsis2}), in this section we are going to consider a numerical scheme. We will use the predictor-corrector approach introduced in \cite{DFF2002} to find numerical solutions. In fact, in such paper it is also included a code that is very simple to implement. It is convenient to point out that in \cite{DFF2002} the algorithm is introduced to find the solution of a single fractional differential equation, in our case, we adapt their algorithm for systems of fractional equations. In addition, we present the algorithm for a more general system than the one considered in Section $3$, since we believe that it may be of interest to a broader audience. 

We are going to consider a numerical scheme for the system of fractional differential equations in Caputo's sense,
\begin{eqnarray}
^{C}\hspace{-0.1cm}D_{0+}^{\alpha}x(t)&=&f(t,x(t),y(t)), \ \ t>0, \label{geq1}\\
^{C}\hspace{-0.1cm}D_{0+}^{\alpha}y(t)&=&g(t,x(t),y(t)), \ \ t>0, \label{geq2}
\end{eqnarray}
where $f(t,x,y)$ and $g(t,x,y)$ are given functions together with the initial data
\begin{equation}
x(0)=x_{0}, \ \ y(0)=y_{0}.  \label{gincond}
\end{equation}

Let us look at a numerical solution in the time interval $[0,T]$. We denote by $N$ the number of divisions of $[0,T]$ and let $t_{n}:=nh$, $n=0,1,...,N$, be the corresponding regular partition, where $h:=T/N$. By $(x_{n+1},y_{n+1})$, we set the numeric approximation of the solution $(x(t),y(t))$ of the system (\ref{geq1})-(\ref{geq2}) at time $t=t_{n+1}$. For $j=0,1,...,n+1$, we will set 
\begin{align*}
x_{n+1}=x_{0}+\frac{h^{\alpha}}{\Gamma(\alpha+2)}\sum_{j=0}^{n}a_{j,n+1}f(t_{j},x_{j},y_{j})+\frac{h^{\alpha}}{\Gamma(\alpha+2)}f(t_{n+1},p_{n+1},q_{n+1}),\\
y_{n+1}=y_{0}+\frac{h^{\alpha}}{\Gamma(\alpha+2)}\sum_{j=0}^{n}a_{j,n+1}g(t_{j},x_{j},y_{j})+\frac{h^{\alpha}}{\Gamma(\alpha+2)}g(t_{n+1},p_{n+1},q_{n+1}),
\end{align*}
where 
\begin{eqnarray*}
a_{j,n+1}&=&
\begin{cases}
n^{\alpha+1}-(n-\alpha)(n+1)^{\alpha}, & j = 0,\\
(n-j+2)^{\alpha+1}+(n-j)^{\alpha+1}-2(n-j+1)^{\alpha+1}, & 1\leq j \leq n,
\end{cases}
\end{eqnarray*}
and 
\begin{gather*}
p_{n+1}=x_{0}+\frac{1}{\Gamma(\alpha)}\sum_{j=0}^{n}b_{j,n+1}f(t_{j},x_{j},y_{j}),\\
q_{n+1}=y_{0}+\frac{1}{\Gamma(\alpha)}\sum_{j=0}^{n}b_{j,n+1}g(t_{j},x_{j},y_{j}),\\
\end{gather*}
here
\begin{equation*}
b_{j,n+1}=\frac{h^{\alpha}}{\alpha}\left( (n+1-j)^{\alpha}-(n-j)^{\alpha}\right).
\end{equation*}

The blow-up time that will be obtained from the numerical scheme is denoted by $t_{\text{num}}$. More precisely, such time is deduced from the graph of the numerical solution of the system of interest. It should be noted that the objective of this work is not to design a numerical scheme to determine the blow-up time, this is a wide topic of study, see for example \cite{PV2022} and the references there in. Our objective here is to show that the explosion time obtained in Theorem \ref{ThM}, which we will denote by $\tau_{ub}$, is an upper bound for the numerical blow-up time, $t_{\text{num}}$.

We are going to consider three examples, which correspond to the three possible cases that can occur in Theorem \ref{ThM}.

\begin{description}[0cm]
\item[Example 1.] Let us examine the system of fractional differential equations
\begin{eqnarray}
^{C}\hspace{-0.1cm}D_{0+}^{\alpha}x(t)&= &t^{0.5}x^{1.5}(t)y^{3.6}(t), \ \ t>0, \label{sisex11} \\
^{C}\hspace{-0.1cm}D_{0+}^{\alpha}y(t)&=&t^{1.5}y^{0.5}(t)x^{2.4}(t), \ \ t>0, \label{sise12}
\end{eqnarray}
with initial conditions
\begin{equation*}
x(0)=1, \ \ y(0)=1.2.
\end{equation*}

We apply the predictor-corrector algorithm to system (\ref{sisex11})-(\ref{sise12}) to obtain the graph of its solution, which can be seen in Figure \ref{GraE1.1}. From the graphical representation we obtain an estimate of the numerical explosion time $t_{\text{num}}$, for the fractional index $\alpha = 0.1, 0.4, 0.6, 0.9$.
\begin{figure}[!ht]    
    \begin{subfigure}[t]{.5\textwidth}
    \includegraphics[width=3in,height=2.25in]{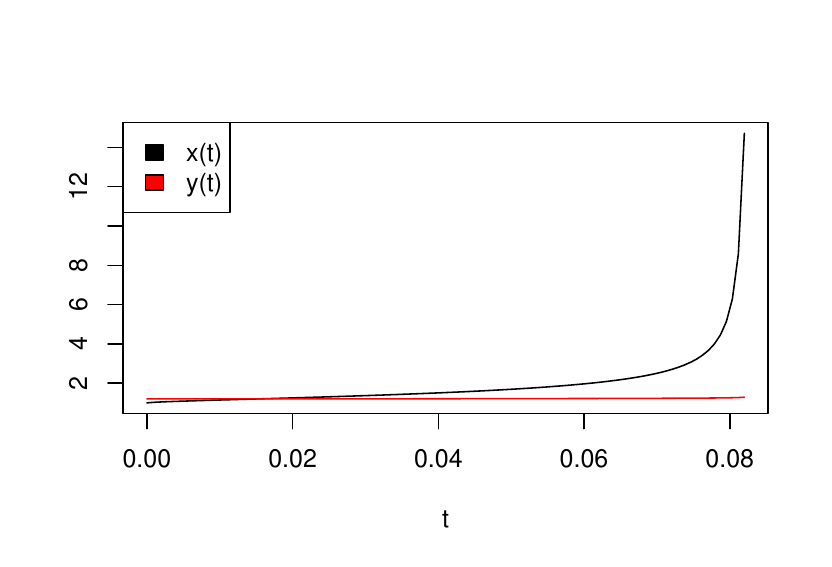}
    \caption{$\alpha=0.1$, \ $t_{\text{num}}=0.85$.}  
    \end{subfigure}
    \hspace{.5cm}
     \begin{subfigure}[t]{.5\textwidth}
    \includegraphics[width=3in,height=2.25in]{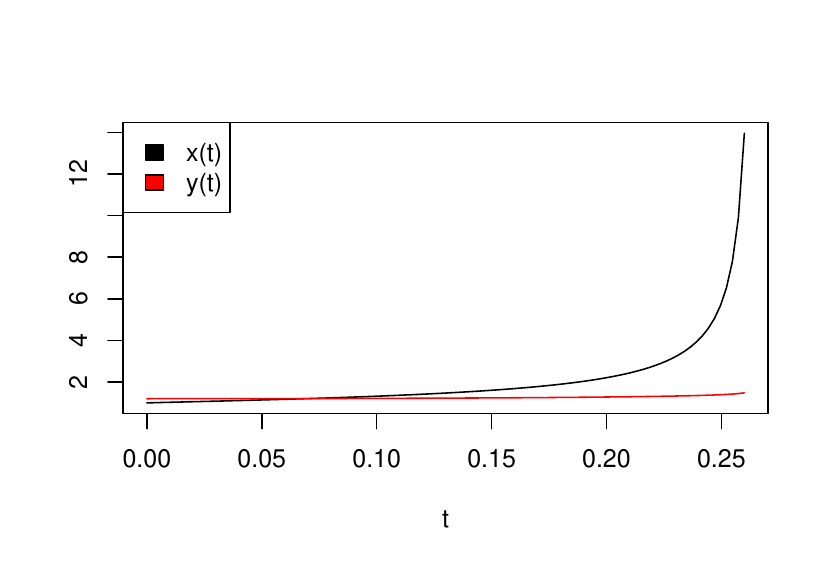}
    \caption{$\alpha=0.4$, \ $t_{\text{num}}=0.28$.}  
    \end{subfigure}
    \begin{subfigure}[t]{.5\textwidth}
    \includegraphics[width=3in,height=2.25in]{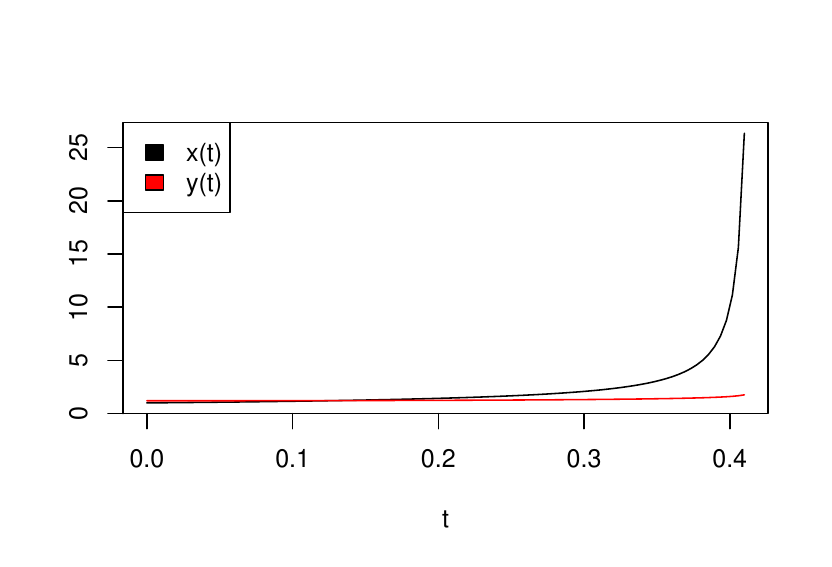}
    \caption{$\alpha=0.6$, \ $t_{\text{num}}=0.44$.}  
    \end{subfigure}
    \hspace{.5cm}
     \begin{subfigure}[t]{.5\textwidth}
    \includegraphics[width=3in,height=2.25in]{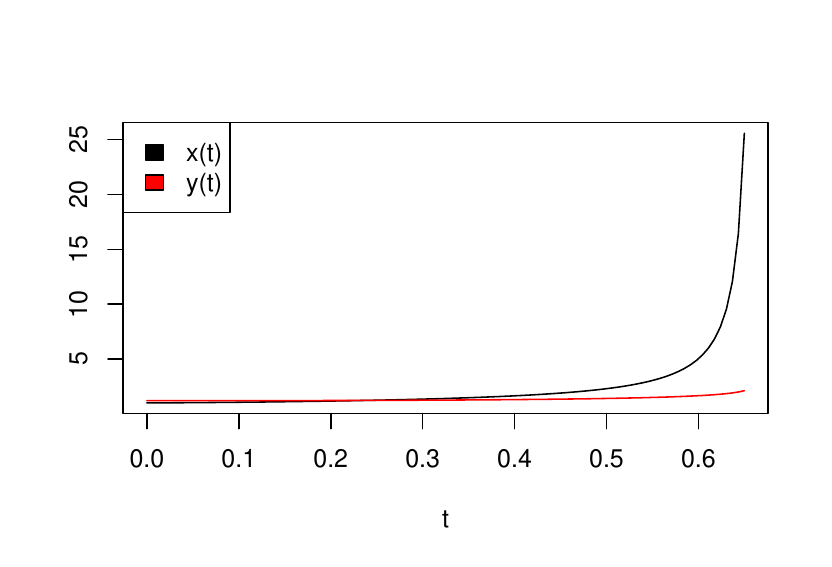}
    \caption{$\alpha=0.9$, \ $t_{\text{num}}=0.67$.}  
    \end{subfigure}
 \vspace{0.4cm}
   \caption{Graphs of the solutions of system (\ref{sisex11})-(\ref{sise12}).}   \label{GraE1.1} 
\end{figure}

On the other hand, the parameters 
$$q_{1}=0.5, \ \  p_{11}=1.5, \ \ p_{12}=3.6, \ \ q_{2}=1.5, \ \ p_{21}=0.5, \ \ p_{22}=2.4,$$
satisfy the conditions of (\ref{casog}), so we can apply Theorem \ref{ThM}.  In order to do this, we need to know the minimum value of the function $B$, defined in (\ref{defB}). Because such a function is quite complicated, it is not obvious that a minimum value exists. Fortunately, when graphing the function $B$, see Figure \ref{GraE1.2}, we note that this function is convex, for each value of $\alpha$, therefore the existence of the value sought follows. The function $B$ has a similar behaviour in the following two examples, so we will omit its graph. With the minimum value of $B$, we get the upper bound  $t_{ub}$ of the blow-up time $\tau_{xy}$.
\begin{figure}[!ht]
    \begin{subfigure}[t]{.5\textwidth}
    \includegraphics[width=3in,height=2.25in]{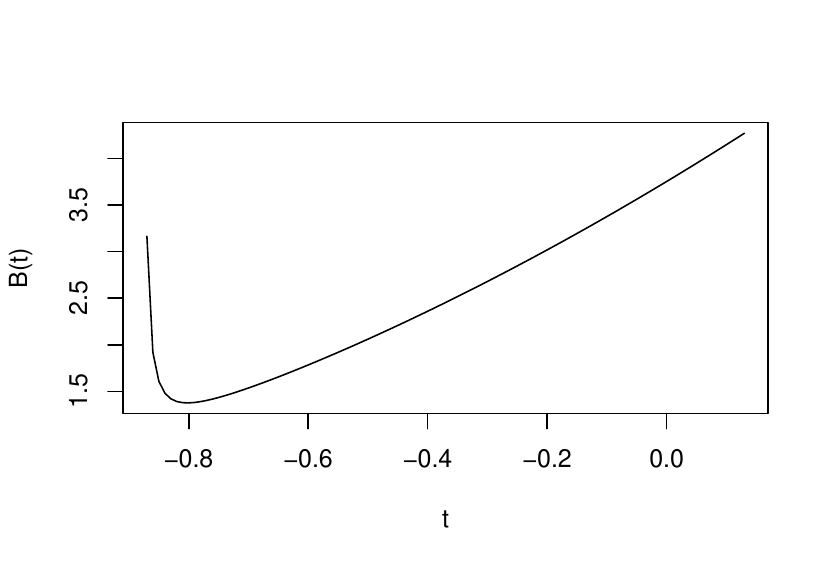}
    \caption{ $\alpha=0.1$, \ $\lambda_{m}=-0.802...$}  
    \end{subfigure}
    \hspace{.5cm}
     \begin{subfigure}[t]{.5\textwidth}
    \includegraphics[width=3in,height=2.25in]{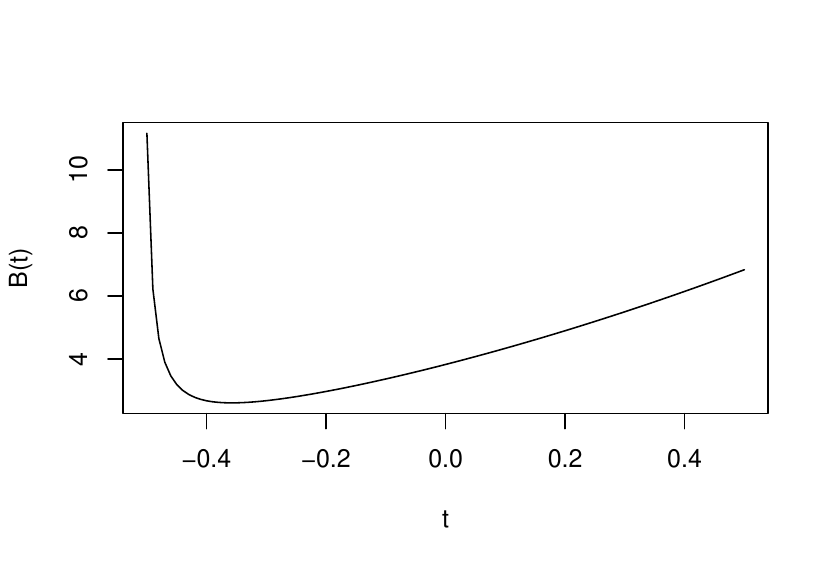}
    \caption{$\alpha=0.4$, \ $\lambda_{m}=-0.358...$}  
    \end{subfigure}
    \begin{subfigure}[t]{.5\textwidth}
    \includegraphics[width=3in,height=2.25in]{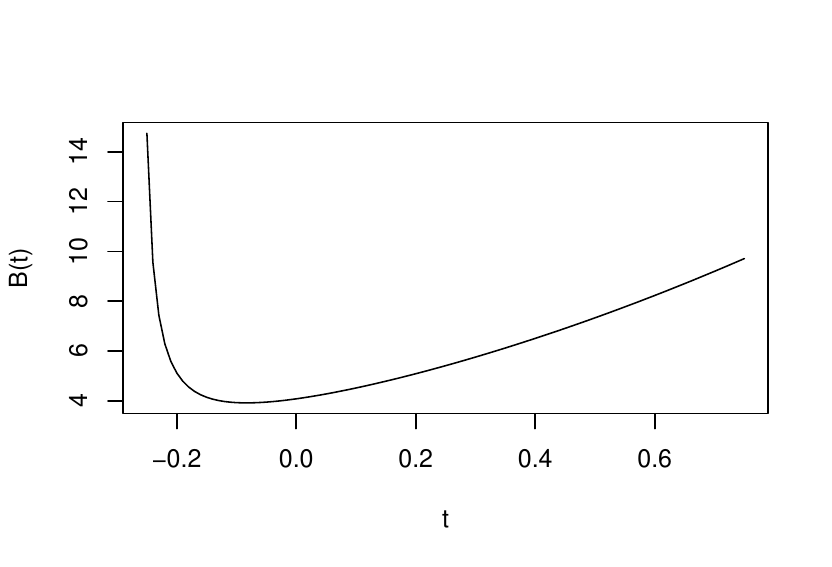}
    \caption{$\alpha=0.6$, \ $\lambda_{m}=-0.083...$}  
    \end{subfigure}
    \hspace{.5cm}
     \begin{subfigure}[t]{.5\textwidth}
    \includegraphics[width=3in,height=2.25in]{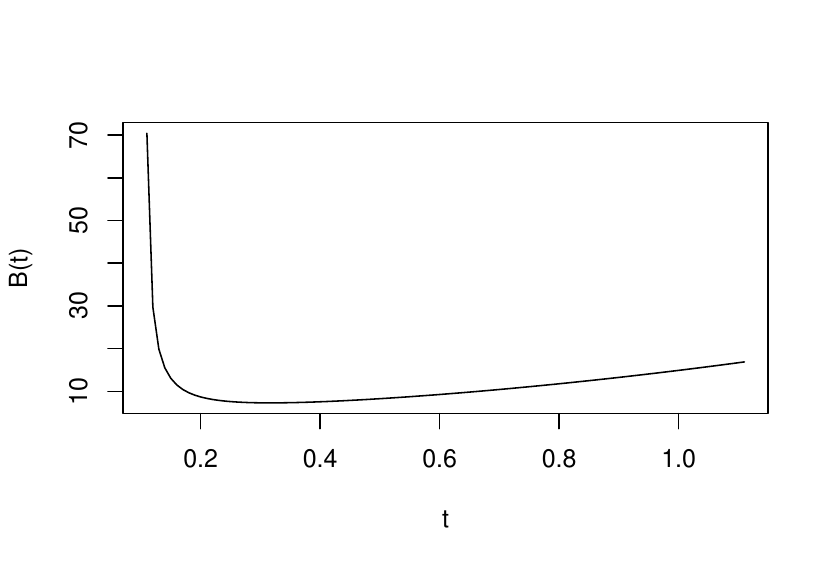}
    \caption{$\alpha=0.9$, \ $\lambda_{m}=0.315...$}  
    \end{subfigure}
 \vspace{0.4cm}
   \caption{Graphs of $B$ given in Example $1$.}   \label{GraE1.2} 
\end{figure}

We concentrate this information in the following table:
\begin{center}
\begin{tabular}{ |c||c|c|c|c| } 
 \hline
 $\alpha$ & $0.1$ & $0.4$ & $0.6$ & $0.9$ \\ 
 \hline
 $\lambda_{m}$  & $-0.802...$ & $-0.358...$ & $-0.083...$ & $0.315...$ \\ 
 \hline
  $t_{\text{num}}$  & $0.085$ & $0.28$ & $0.44$ & $0.66$ \\ 
 \hline
 $\tau_{ub}$  & $0.720...$ & $0.998...$ & $1.169...$ & $1.415...$ \\ 
 \hline
\end{tabular}
\end{center}
From this, we clearly appreciate that upper bounds have been obtained for the blow-up (explosion) time. In graph (d) of Figure 1 it can be seen that the function $y(t)$ begins to grow; in fact, if it is graphed separately, it can be seen that it also explodes in finite time. Its blow-up time is approximately $0.69$.

\item[Example 2.] Now let us consider the system of fractional differential equations
\begin{eqnarray}
^{C}\hspace{-0.1cm}D_{0+}^{\alpha}x(t)&= &y^{3.2}(t), \ \ t>0, \label{sisex21} \\
^{C}\hspace{-0.1cm}D_{0+}^{\alpha}y(t)&=&y^{0.2}(t)x^{0.5}(t), \ \ t>0, \label{sisex22}
\end{eqnarray}
with initial conditions
\begin{equation*}
x(0)=0.5, \ \ y(0)=0.5.
\end{equation*}
Applying the predictor-corrector algorithm, the graphs of the solutions of system (\ref{sisex21})-(\ref{sisex22}), for the parameter $\alpha = 0.1, 0.4, 0.6, 0.9$, are obtained, and they appear in Figure \ref{GraE2}. From here we estimate the numerical value of the explosion time, $t_{\text{num}}$.
\begin{figure}[!ht]   
    \begin{subfigure}[t]{.5\textwidth}
    \includegraphics[width=3in,height=2.25in]{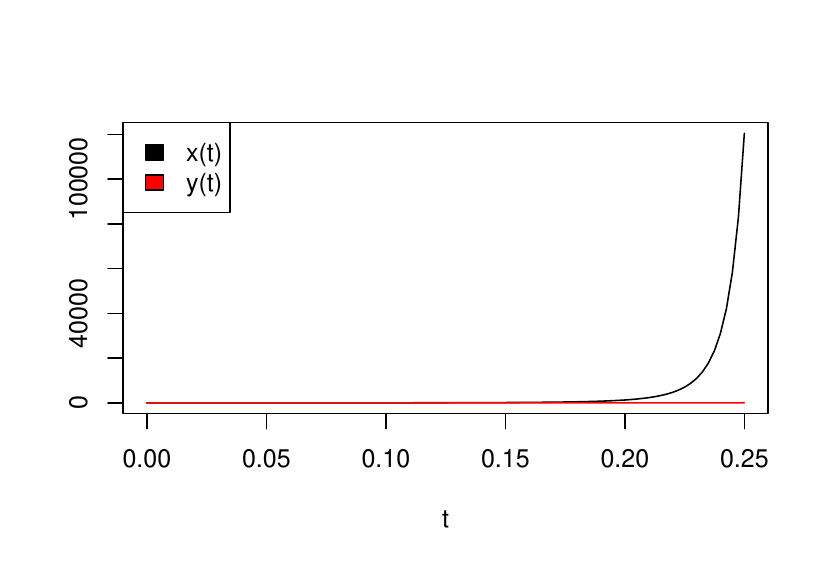}
    \caption{$\alpha=0.1$, \ $t_{\text{num}}=0.35$.}  
    \end{subfigure}
    \hspace{.5cm}
     \begin{subfigure}[t]{.5\textwidth}
    \includegraphics[width=3in,height=2.25in]{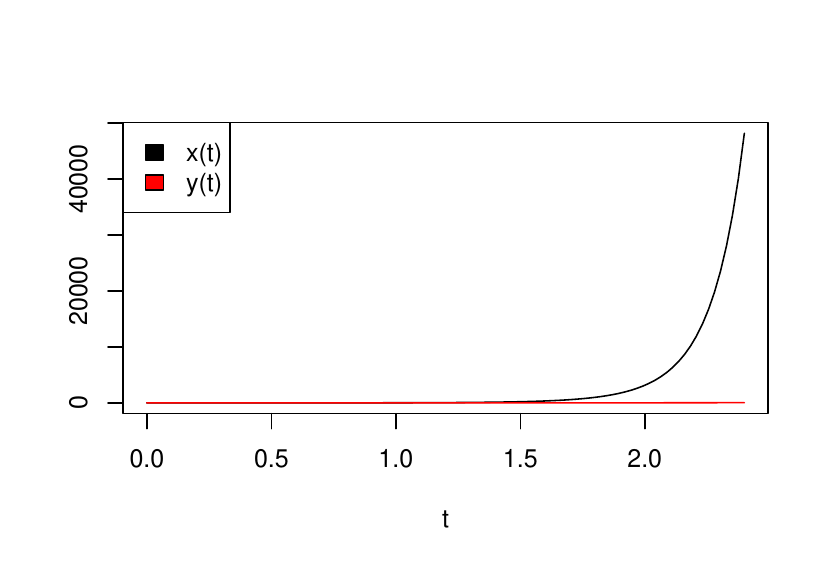}
    \caption{$\alpha=0.4$, \  $t_{\text{num}}=3.8$.}  
    \end{subfigure}
    \begin{subfigure}[t]{.5\textwidth}
    \includegraphics[width=3in,height=2.25in]{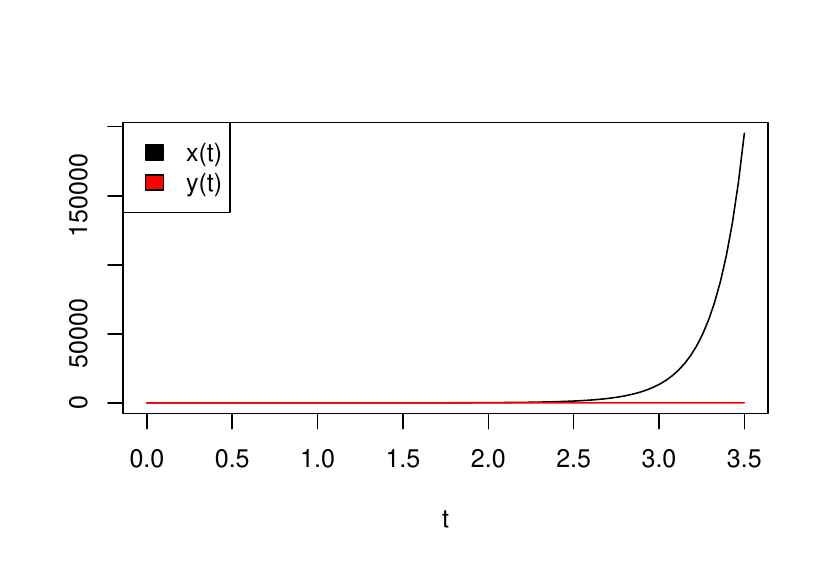}
    \caption{$\alpha=0.6$, \ $t_{\text{num}}=5.1$.}  
    \end{subfigure}
    \hspace{.5cm}
     \begin{subfigure}[t]{.5\textwidth}
    \includegraphics[width=3in,height=2.25in]{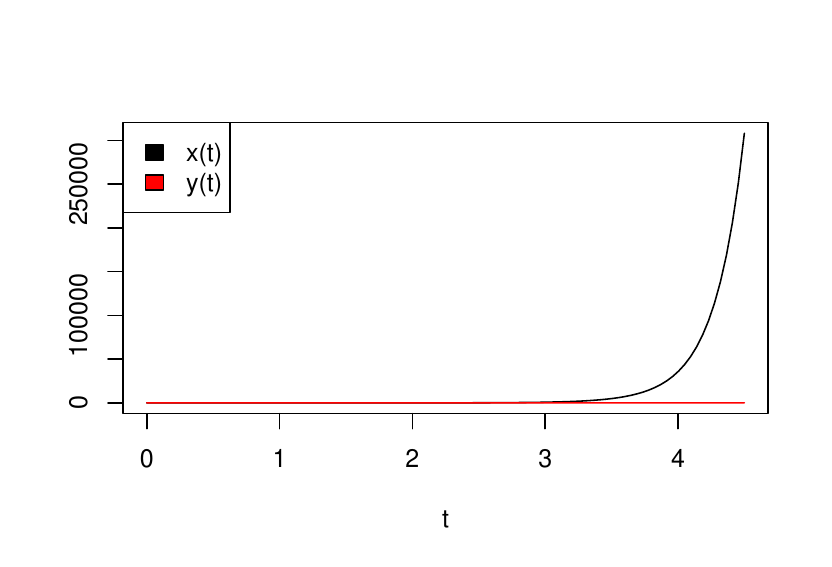}
    \caption{$\alpha=0.9$, \ $t_{\text{num}}=6.9$.}  
    \end{subfigure}
 \vspace{0.4cm}
   \caption{Graphs of the solutions of system (\ref{sisex21})-(\ref{sisex22}).}   \label{GraE2} 
\end{figure}

We observe that the parameters
$$q_{1}=0, \ \ p_{11}=0, \ \ p_{12}=3.2, \ \ q_{2}=0, p_{21}=0.2, \ \ p_{22}=0.5,$$
of the system of equations  (\ref{sisex21})-(\ref{sisex22}) meet, in this case, conditions (\ref{caso1}), therefore we can apply Theorem \ref{ThM} to obtain an estimate $\tau_{ub}$ from above the blow-up time $\tau_{xy}$ of this system. We summarize the results in the following table:
\begin{center}
\begin{tabular}{ |c||c|c|c|c| } 
 \hline
 $\alpha$ & $0.1$ & $0.4$ & $0.6$ & $0.9$ \\ 
 \hline
   $t_{\text{num}}$  & $0.35$ & $3.8$ & $5.1$ & $6.9$ \\ 
 \hline
 $\tau_{ub}$  & $8.899...$ & $6.333...$ & $7.297...$ & $8.948...$ \\ 
 \hline
\end{tabular}
\end{center}

As a result, we can conclude that $\tau_{ub}$ is an upper bound for the numerical blow-up time, $t_{\text{num}}$. As in the previous case, if we graph the function $y(t)$ separately we can determine its explosion time, which in this case is approximately $8.7$. Remember that the explosion time of a system is the minimum of the explosion times of each component.

\item[Example 3.] Here we deal with the system of fractional differential equations
\begin{eqnarray}
^{C}\hspace{-0.1cm}D_{0+}^{\alpha}x(t)&= &t^{0.5}x(t)y^{3}(t), \ \ t>0, \label{sisex31}\\
^{C}\hspace{-0.1cm}D_{0+}^{\alpha}y(t)&=&t^{0.5}y^{2}(t)x^{4}(t), \ \ t>0, \label{sisex32}
\end{eqnarray}
with initial conditions
\begin{equation*}
x(0)=1, \ \ y(0)=1.
\end{equation*}
As in the previous cases, we apply the predictor-corrector algorithm to obtain the graphical solutions of system (\ref{sisex31})-(\ref{sisex32}); we present them in Figure \ref{GraE3}. Numerical estimates for time $t_{\text{num}}$ are obtained from them.
\begin{figure}[!ht]   
    \begin{subfigure}[t]{.5\textwidth}
    \includegraphics[width=3in,height=2.25in]{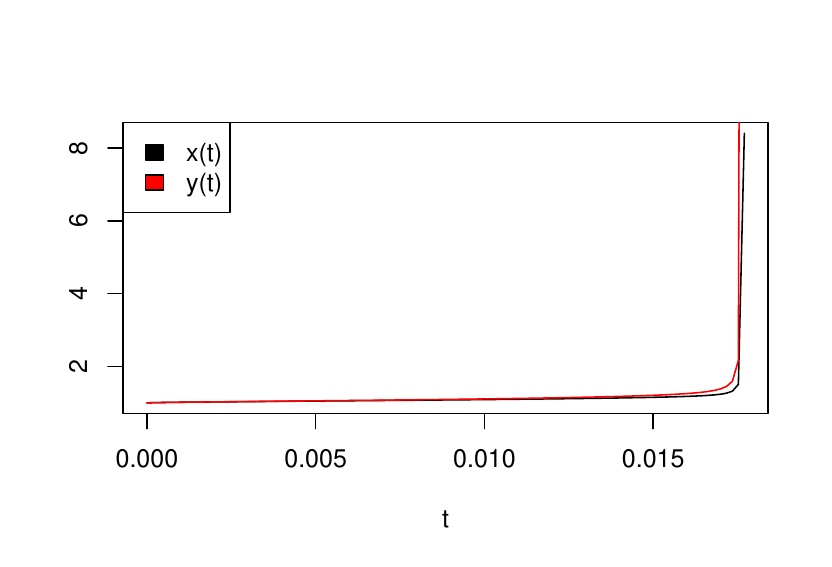}
    \caption{$\alpha=0.1$, \ $t_{\text{num}}=0.019$.}  
    \end{subfigure}
    \hspace{.5cm}
     \begin{subfigure}[t]{.5\textwidth}
    \includegraphics[width=3in,height=2.25in]{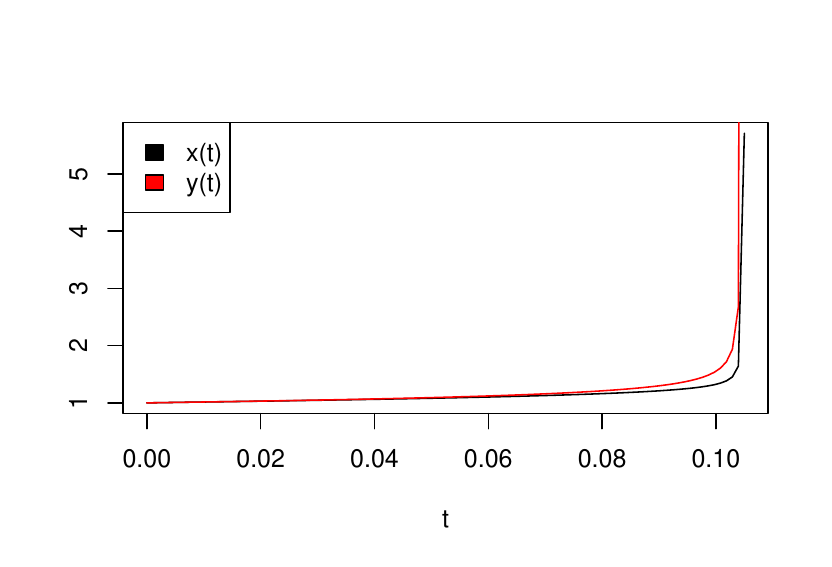}
    \caption{$\alpha=0.4$, \ $t_{\text{num}}=0.11$.}  
    \end{subfigure}
    \begin{subfigure}[t]{.5\textwidth}
    \includegraphics[width=3in,height=2.25in]{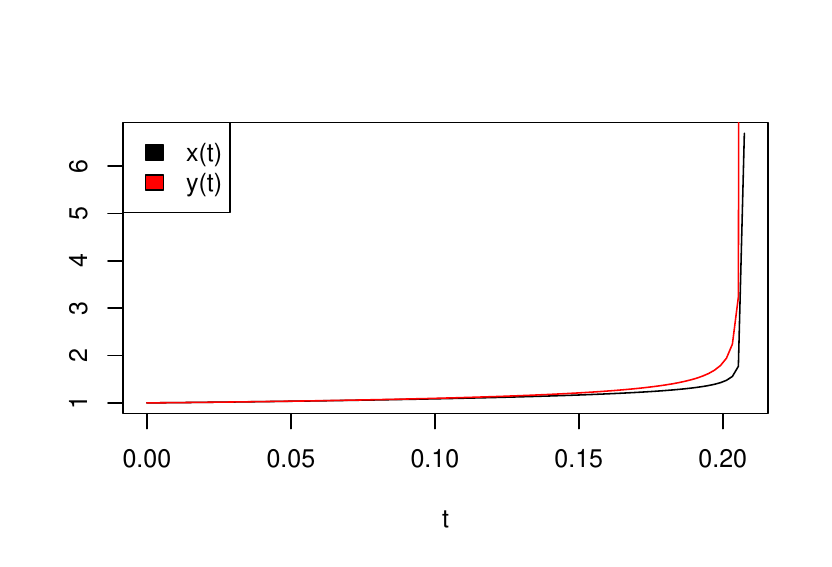}
    \caption{$\alpha=0.6$, \  $t_{\text{num}}=0.21$.}  
    \end{subfigure}
    \hspace{.5cm}
     \begin{subfigure}[t]{.5\textwidth}
    \includegraphics[width=3in,height=2.25in]{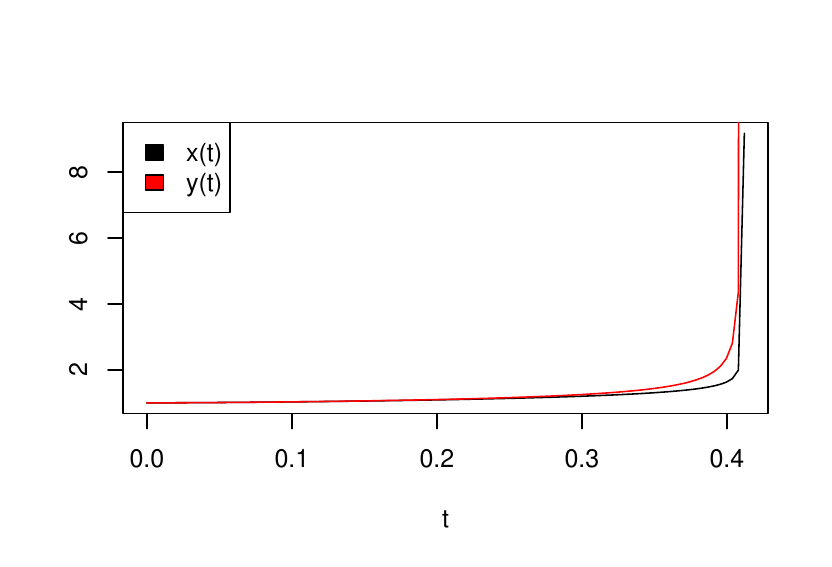}
    \caption{$\alpha=0.9$, \ $t_{\text{num}}=0.42$.}  
    \end{subfigure}
 \vspace{0.4cm}
   \caption{Graphs of the solutions of system (\ref{sisex31})-(\ref{sisex32}).}   \label{GraE3} 
\end{figure}
The parameters of the system (\ref{sisex31})-(\ref{sisex32}) are
$$q_{1}=0.5, \ \ p_{11}=0
1, \ \ p_{12}=3, \ \ q_{2}=0.5, p_{21}=2, \ \ p_{22}=4,$$
and they meet the conditions imposed in (\ref{caso2}), therefore we can apply Theorem \ref{ThM} to obtain $\tau_{ub}$, which provides us with an upper bound for the numerical time explosion, $t_{\text{num}}$. We put this information together in the following table:
\begin{center}
\begin{tabular}{ |c||c|c|c|c| } 
 \hline
 $\alpha$ & $0.1$ & $0.4$ & $0.6$ & $0.9$ \\ 
 \hline
 $t_{\text{num}}$  & $0.019$ & $0.11$ & $0.21$ & $0.42$ \\ 
 \hline
$\tau_{ub}$  & $1.228...$ & $1.551...$ & $1.726...$ & $1.967...$ \\ 
 \hline
\end{tabular}
\end{center}
Here we again see that $\tau_{ub}$ is an upper bound for $t_{\text{num}}$. In this case, both components have approximately the same explosion time. 

\end{description}

\section{Conclusions}

In the present work, upper bounds have been obtained for the blow-up time of a system of fractional differential equations in the Caputo sense (\ref{eq1})-(\ref{eq2}). As a consequence of this, sufficient conditions have been derived so the system of fractional differential equations does not have a global solution. Furthermore, based on the predictor-corrector algorithm, a numerical scheme has been proposed to solve systems of fractional differential equations in general. Using this numerical method, three illustrative examples have been presented that correspond to each of the cases of the Theorem \ref{ThM}. 

\bigskip
\bigskip
\noindent \textit{Acknowledgment:} 
The author was partially supported by the grant PIM22-1 of Universidad Aut\'{o}noma de Aguascalientes. Thanks to Jos\'e Miguel Villa-Ocampo for his help in the implementation of the numerical algorithm in R.

\bibliographystyle{elsarticle-harv}
\bibliography{FraSis2}

\end{document}